\newtheorem{theorem}{Theorem}[section]
\newtheorem*{theorem*}{Theorem}
\newtheorem{fact}[theorem]{Fact}
\newtheorem{lemma}[theorem]{Lemma}
\theoremstyle{definition}
\newtheorem*{definition*}{Definition}
\theoremstyle{remark}
\newtheorem*{remark*}{Remark}
\def \z{\mathbb{Z}}
\def \n{\mathbb{N}}
\def \BD{\operatorname{BD}}
\begin{document}
\title[High density piecewise syndeticity in groups]{High density piecewise
syndeticity of product sets in amenable groups}
\author[Di Nasso et. al.]{Mauro Di Nasso, Isaac Goldbring, Renling Jin,
Steven Leth, Martino Lupini, Karl Mahlburg}
\address{Dipartimento di Matematica, Universita' di Pisa, Largo Bruno
Pontecorvo 5, Pisa 56127, Italy}
\email{dinasso@dm.unipi.it}
\address{Department of Mathematics, Statistics, and Computer Science,
University of Illinois at Chicago, Science and Engineering Offices M/C 249,
851 S. Morgan St., Chicago, IL, 60607-7045}
\email{isaac@math.uic.edu}
\address{Department of Mathematics, College of Charleston, Charleston, SC,
29424}
\email{JinR@cofc.edu}
\address{School of Mathematical Sciences, University of Northern Colorado,
Campus Box 122, 510 20th Street, Greeley, CO 80639}
\email{Steven.Leth@unco.edu}
\address{Fakult\"{a}t f\"{u}r Mathematik, Universit\"{a}t Wien,
Oskar-Morgenstern-Platz 1, Room 02.126, 1090 Wien, Austria.}
\email{martino.lupini@univie.ac.at}
\address{Department of Mathematics, Louisiana State University, 228 Lockett
Hall, Baton Rouge, LA 70803}
\email{mahlburg@math.lsu.edu}
\thanks{The authors were supported in part by the American Institute of
Mathematics through its SQuaREs program. I. Goldbring was partially
supported by NSF CAREER grant DMS-1349399. M. Lupini was supported by the
York University Susan Mann Dissertation Scholarship and by the ERC Starting
grant no.\ 259527 of Goulnara Arzhantseva. K. Mahlburg was supported by NSF
Grant DMS-1201435.}
\subjclass[2000]{ }
\keywords{}
\dedicatory{ }

\begin{abstract}
M. Beiglb\"ock, V. Bergelson, and A. Fish proved that if $G$ is a countable
amenable group and $A$ and $B$ are subsets of $G$ with positive Banach
density, then the product set $AB$ is piecewise syndetic. This means that
there is a finite subset $E$ of $G$ such that $EAB$ is thick, that is, $EAB$
contains translates of any finite subset of $G$. When $G=\mathbb{Z}$, this
was first proven by R. Jin. We prove a quantitative version of the
aforementioned result by providing a lower bound on the density (with
respect to a F\o lner sequence) of the set of witnesses to the thickness of $%
EAB$. When $G=\mathbb{Z}^d$, this result was first proven by the current set
of authors using completely different techniques.
\end{abstract}

\maketitle

\section{Introduction}

In the paper \cite{jin}, R. Jin proved that if $A$ and $B$ are subsets of $%
\mathbb{Z}$ with positive Banach density, then $A+B$ is \emph{piecewise
syndetic}. This means that there is $m\in \mathbb{N}$ such that $A+B+[-m,m]$
is \emph{thick}, i.e.\ it contains arbitrarily large intervals. Jin's result
has since been extended in two different ways. First, using ergodic theory,
M. Beiglb\"{o}ck, V. Bergelson, and A. Fish \cite{beiglbock_sumset_2010}
established Jin's result for arbitrary countable amenable groups (with
suitable notions of Banach density and piecewise syndeticity); in \cite%
{di_nasso_nonstandard_2014}, M. Di Nasso and M. Lupini gave a simpler proof
of the amenable group version of Jin's theorem using nonstandard analysis
which works for arbitrary (not necessarily countable) amenable groups and
also gives a bound on the size of the finite set needed to establish that
the product set is thick. Second, in \cite{di_nasso_high_2015} the current
set of authors established a \textquotedblleft quantitative
version\textquotedblright\ of Jin's theorem by proving that there is $m\in 
\mathbb{N}$ such that the set of witnesses to the thickness of $A+B+[-m,m]$
has upper density at least as large as the upper density of $A$. (We
actually prove this result for subsets of $\mathbb{Z}^{d}$ for any $d$.) The
goal of this article is to prove the quantitative version of the result of
Beiglb\"{o}ck, Bergelson, and Fish.

In the following, we assume that $G\ $is a countable amenable group\footnote{%
For those not familiar with amenable groups, let us mention in passing that
the class of (countable) amenable groups is quite robust, e.g. contains all
finite and abelian groups and is closed under subgroups, quotients,
extensions, and direct limits. It follows, for example, that every countable
virtually solvable group is amenable.}: for every finite subset $E$ of $G$
and every $\varepsilon >0$, there exists a finite subset $L$ of $G$ such
that, for every $x\in E$, we have $\left\vert xL\bigtriangleup L\right\vert
\leq \varepsilon \left\vert L\right\vert $. We recall that $G$ is amenable
if and only if it has a (left)\emph{\ F\o lner sequence}, which is a
sequence $\mathcal{S}=(S_{n})$ of finite subsets of $G$ such that, for every 
$x\in G$, we have $\frac{\left\vert xS_{n}\bigtriangleup S_{n}\right\vert }{%
|S_{n}|}\rightarrow 0$ as $n\rightarrow +\infty $. 

If $\mathcal{S}$ is a F\o lner sequence and $A\subseteq G$, we define the
corresponding upper $\mathcal{S}$\emph{-density} of $A$ to be%
\begin{equation*}
\overline{d}_{\mathcal{S}}\left( A\right) :=\limsup_{n\rightarrow +\infty }%
\frac{\left\vert A\cap S_{n}\right\vert }{\left\vert S_{n}\right\vert }.
\end{equation*}

\noindent For example, note that if $G=\mathbb{Z}^{d}$ and $\mathcal{S}%
=(S_{n})$ where $S_{n}=[-n,n]^{d}$, then $\overline{d}_{\mathcal{S}}$ is the
usual notion of upper density for subsets of $\mathbb{Z}^{d}$.

Following \cite{beiglbock_sumset_2010}, we define the (left) \emph{Banach
density of $A$}, $\mathrm{BD}\left( A\right) $, to be the supremum of $%
\overline{d}_{\mathcal{S}}\left( A\right) $ where $\mathcal{S}$ ranges over
all F\o lner sequences of $G$. One can verify (see \cite%
{beiglbock_sumset_2010}) that this notion of Banach density agrees with the
usual notion of Banach density when $G=\mathbb{Z}^{d}$.

Recall that a subset $A$ of $G$ is \emph{thick }if for every finite subset $%
L $ of $G$ there exists a right translate $Lx$ of $L$ contained in $A$. A
subset $A$ of $G$ is \emph{piecewise syndetic }if $FA$ is thick for some
finite subset $F$ of $G$.

\begin{definition*}
Suppose that $G$ is a countable discrete group, $\mathcal{S}$ is a F\o lner
sequence for $G$, $A$ is a subset of $G$, and $\alpha >0$. We say that $A$ is

\begin{itemize}
\item \emph{upper $\mathcal{S}$-thick of level $\alpha $} if for every finite
subset $L$ of $G$, the set $\left\{ x\in G:Lx\subseteq A\right\} $ has upper 
$\mathcal{S}$-density at least $\alpha $;

\item \emph{upper $\mathcal{S}$-syndetic of level $\alpha $} if there exists a
finite subset $F$ of $G$ such that $FA$ is $\mathcal{S}$-thick of level $%
\alpha $.
\end{itemize}

\end{definition*}

\noindent The following is the first main result of this paper:

\begin{theorem*}
\label{main} Suppose that $G$ is a countable amenable group and $\mathcal{S}$
is a F\o lner sequence for $G$. If $A$ and $B$ are subsets of $G$ such that $%
\overline{d}_{\mathcal{S}}(A)=\alpha >0$ and $\mathrm{BD}(B)>0$, then $BA$
is upper $\mathcal{S}$-syndetic of level $\alpha $.
\end{theorem*}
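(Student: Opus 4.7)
The plan is to reduce the theorem to the following key claim: there exists a finite $F \subseteq G$ such that for every finite $L \subseteq G$ one can find $s \in G$ with $LsA \subseteq FBA$. Granting this claim, the witness set $W_L := \{x \in G : Lx \subseteq FBA\}$ contains the left translate $sA$, and since $\overline{d}_{\mathcal{S}}(sA) = \overline{d}_{\mathcal{S}}(A) = \alpha$ by left-invariance of upper $\mathcal{S}$-density under standard translations (a consequence of the F\o lner property), we obtain $\overline{d}_{\mathcal{S}}(W_L) \geq \alpha$, which is exactly what the theorem asserts. The key claim is in turn equivalent to saying that the set $Z_F := \{g \in G : gA \subseteq FBA\}$ is thick in $G$: for any finite $L$ one then picks $s \in \bigcap_{\ell \in L} \ell^{-1} Z_F$, giving $\ell s A \subseteq FBA$ for each $\ell \in L$.

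To produce such an $F$ I follow the nonstandard strategy of Di Nasso--Lupini. After passing to a subsequence of $\mathcal{S}$ realizing the density $\alpha$ and working in a sufficiently saturated enlargement, fix a hyperfinite index $N$ with $|{}^{\ast}A \cap S_N|/|S_N| \approx \alpha$; the normalized Loeb measure $\mu$ on $S_N$ is then invariant under standard left translations and satisfies $\mu({}^{\ast}A) = \alpha$. Using $\mathrm{BD}(B) = \beta > 0$, choose a hyperfinite F\o lner set $T \subseteq {}^{\ast}G$ (after a right-translation, if necessary) with $|{}^{\ast}B \cap T|/|T| \approx \beta$. A greedy covering of $T$ by finitely many left translates of ${}^{\ast}B \cap T$ produces a finite $F \subseteq G$ for which $F \cdot ({}^{\ast}B \cap T)$ covers $T$ outside an internal subset of arbitrarily small Loeb density; this is the candidate $F$.

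The central task --- and the main obstacle --- is to verify that $Z_F$ is thick for this $F$. For a fixed finite $L \subseteq G$, the F\o lner property of $S_N$ relative to $L$ ensures that for almost every (Loeb) $a \in {}^{\ast}A \cap S_N$ one has $LTa \subseteq S_N$, so that $F \cdot ({}^{\ast}B \cap T) \cdot a$ covers $Ta$ outside a Loeb-null portion. Averaging this conclusion over $a \in {}^{\ast}A \cap S_N$, which has $\mu$-measure $\alpha > 0$, and applying a pigeonhole on the possible positions of the shift $s$ should produce a nonstandard $s \in {}^{\ast}G$ with $Ls \cdot {}^{\ast}A \subseteq {}^{\ast}(FBA)$ modulo a Loeb-null set; $\sigma$-saturation together with the countability of $G$ then descends this to a standard $s \in G$ satisfying $LsA \subseteq FBA$. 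The delicate part is making the averaging/pigeonhole rigorous in the non-abelian setting, where the left/right asymmetry of the F\o lner condition must be managed by carefully leveraging both the density $\alpha$ of $A$ along $\mathcal{S}$ and the Banach density $\beta$ of $B$ along the independent F\o lner set $T$.
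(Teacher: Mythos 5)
Your reduction is appealing, but it asks for something strictly stronger than the theorem, and the sketch for the key claim has gaps that I do not think can be repaired.

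First, your intermediate claim---that there is a finite $F$ with $Z_F=\{g\in G: gA\subseteq FBA\}$ thick---says that for every finite $L$ the witness set $W_L=\{x: Lx\subseteq FBA\}$ contains an entire left translate $sA$ of $A$. The theorem only asserts $\overline{d}_{\mathcal{S}}(W_L)\geq\alpha$. These are very different: $W_L$ having density $\alpha$ does not mean it contains a translate of a particular density-$\alpha$ set $A$. Indeed, the paper points to its Example 15 (in \cite{di_nasso_high_2015}) showing the bound $\alpha$ is optimal; in such an extremal situation $W_L$ has density exactly $\alpha$, and containing $sA$ would force $W_L$ and $sA$ to agree up to a null set, which there is no reason to expect. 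So you should not expect the reduction to go through in general, and you certainly cannot treat the claim as a routine lemma.

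Second, the central step of your sketch---``the F\o lner property of $S_N$ relative to $L$ ensures that for almost every $a\in{}^{\ast}A\cap S_N$ one has $LTa\subseteq S_N$''---is false. The F\o lner property of $S_N$ only controls left translation by the \emph{finite} set $L$: it gives $|LS_N\bigtriangleup S_N|/|S_N|\approx 0$, hence $La\subseteq S_N$ for Loeb-a.e.\ $a\in S_N$. But $T$ is a hyperfinite set with no size relation to $S_N$ (it is chosen as a F\o lner approximation good for $B$, independently of $\nu$); $Ta$ need not be anywhere near $S_N$ for even a single $a$. Third, even granting the averaging/pigeonhole step, you would at best obtain a nonstandard $s$ with $Ls\cdot{}^{\ast}A\subseteq{}^{\ast}(FBA)$ up to a Loeb-null set. ``Up to Loeb-null'' is an external condition and cannot be transferred; and a Loeb-null exceptional set can still meet $L s A$ (a countable set) everywhere, so you do not recover the exact containment $LsA\subseteq FBA$ for a standard $s$. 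The paper avoids all of this by never attempting to fit a translate of $A$ inside the witness set: it produces an \emph{internal} set $\Gamma\subseteq S_\nu$ of Loeb measure $\geq\alpha$ (namely $\{x\in S_\nu: |x C^{-1}\cap Y|/|Y|\geq r\}$ with $C={}^{\ast}A\cap S_\nu$) and shows directly, by a mass-overlap argument with ${}^{\ast}(gFB)\cap Y$, that $\Gamma\subseteq{}^{\ast}\bigl(\bigcap_{g\in L}gFBA\bigr)$ for each finite $L$. This gives the density bound on $W_L$ without any translate of $A$, and it is this measure-theoretic overlap, rather than a pigeonhole on shifts, that does the real work.
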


When $G=\mathbb{Z}^d$ and $\mathcal{S}=(S_n)$ where $S_n=[-n,n]^d$, we
recover \cite[Theorem 14]{di_nasso_high_2015}. We also recover \cite[Theorem
18]{di_nasso_high_2015}, which states (in the current terminology) that if $%
A,B\subseteq \mathbb{Z}^d$ are such that $\underline{d}(A)=\alpha>0$ and $%
\mathrm{BD}(B)>0$, then $A+B$ is $\mathcal{S^{\prime }}$-syndetic of level $%
\alpha$ for any subsequence $\mathcal{S}^{\prime }$ of the aforementioned $%
\mathcal{S}$.

If $\mathcal{S}$ is a F\o lner sequence for $G$, then the notions of \emph{lower $\mathcal{S}$-density $\underline{d}_\mathcal{S}$}, \emph{lower $\mathcal{S}$-thick}, and \emph{lower $\mathcal{S}$-syndetic} can be defined in the obvious ways.  In \cite[Theorem 19]{di_nasso_high_2015}, the current set of authors proved that if $A,B\subseteq \z^d$ are such that $\underline{d}_{\mathcal{S}}(A)=\alpha>0$ and $\BD(B)>0$ (where $\mathcal{S}$ is the usual F\o lner sequence for $\z^d$ as above), then for any $\epsilon>0$, $A+B$ is lower $\mathcal{S}$-syndetic of level $\alpha-\epsilon$.  (An example is also given to show that, under the previous hypotheses, $A+B$ need not be lower $\mathcal{S}$-syndetic of level $\alpha$.)  In a previous version of this paper, we asked whether or not the amenable group analog of \cite[Theorem 19]{di_nasso_high_2015} was true.  Using ergodic-theoretic methods, Michael Bj\"{o}%
rklund \cite{bjorklund} settled this question in the affirmative.  Shortly after, we realized that our techniques readily established the same result and we include our proof here. 

In their proof of the amenable group version of Jin's theorem, the authors
of \cite{di_nasso_nonstandard_2014} give a bound on the size of a finite set
needed to witness piecewise syndeticity: if $G$ is a countable amenable
group and $A$ and $B$ are subsets of $G$ of Banach densities $\alpha $ and $%
\beta $ respectively, then there is a finite subset $E$ of $G$ with $%
\left\vert E\right\vert \leq \frac{1}{\alpha \beta }$ such that $EAB$ is
thick. In Section \ref{Section:bound}, we improve upon this theorem in two
ways: we slightly improve the bound on $\left\vert E\right\vert $ from $%
\frac{1}{\alpha \beta }$ to $\frac{1}{\alpha \beta }-\frac{1}{\alpha }+1$
and we show that, if $\mathcal{S}$ is any F\o lner sequence such that $%
\overline{d}_{\mathcal{S}}\left( A\right) >0$, then $EAB$ is $\mathcal{S}$%
-thick of level $s$ for some $s>0$.


\subsection*{Notions from nonstandard analysis}

We use nonstandard analysis to prove our main results. An introduction to
nonstandard analysis with an eye towards applications to combinatorics can
be found in \cite{jin_introduction_2008}. Here, we just fix notation.

If $r,s$ are finite hyperreal numbers, we write $r\lesssim s$ to mean $%
\mathrm{st}(r)\leq \mathrm{st}(s)$ and we write $s\approx r$ to mean $%
\mathrm{st}(r)=\mathrm{st}(s)$.

If $X$ is a hyperfinite subset of ${}^\ast G$, we denote by $\mu _{X}$ the
corresponding Loeb measure. If, moreover, $Y\subseteq {}^\ast G$ is
internal, we abuse notation and write $\mu_X(Y)$ for $\mu_X(X\cap Y)$.

If $\mathcal{S}=\left( S_{n}\right) $ is a F\o lner sequence for $G$ and $%
\nu $ is an infinite hypernatural number, then we denote by $S_{\nu }$ the
value at $\nu $ of the nonstandard extension of $\mathcal{S}$. It follows
readily from the definition that $\overline{d}_{\mathcal{S}}\left( A\right) $
is the maximum of $\mu _{S_{\nu }}\left( {}^{\ast }A\right) $ as $\nu $
ranges over all infinite hypernatural numbers. It is also not difficult to
verify that a countable discrete group $G$ is amenable if and only if it
admits a \emph{F\o lner approximation}, which is a hyperfinite subset $X$ of 
${}^{\ast }G$ such that $\left\vert gX\bigtriangleup X\right\vert
/\left\vert X\right\vert $ is infinitesimal for every $g\in G$. (One
direction of this equivalence is immediate: if $(S_{n})$ is a F\o lner
sequence for $G$, then $S_{\nu }$ is a F\o lner approximation for $G$
whenever $\nu $ is an infinite hypernatural number.) 

%

\subsection{Acknowledgements}

This work was initiated during a week-long meeting at the American
Institute for Mathematics on August 4-8, 2014 as part of the SQuaRE (Structured
Quartet Research Ensemble) project \textquotedblleft Nonstandard Methods in
Number Theory.\textquotedblright\ The authors would like to thank the
Institute for the opportunity and for the Institute's hospitality during their stay.

The authors would also like to thank the anonymous referee of a previous version of this paper for helpful remarks that allowed us to sharpen our results.

\section{High density piecewise syndeticity\label{Section:high_density}}


The following fact is central to our arguments and its proof is contained in the proof of \cite[Lemma 4.6]{di_nasso_sumset_2014}.

\begin{fact}\label{goodfolner}
Suppose that $G$ is a countable amenable group and $B\subseteq G$ is such that $\BD(B)\geq \beta$.  Then there is a F\o lner approximation $Y$ of $G$ such that 
\begin{equation*}
\frac{\left\vert {}^{\ast }B\cap Y\right\vert }{\left\vert Y\right\vert }%
\gtrsim \beta
\end{equation*}%
and, for any standard $\varepsilon >0$, there exists a finite subset $F$ of $G$
such that 
\begin{equation*}
\frac{\left\vert {}^{\ast }(FB)\cap Y\right\vert }{\left\vert Y\right\vert }%
>1-\varepsilon \text{.}
\end{equation*}%
\end{fact}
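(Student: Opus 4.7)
The plan is to realize $\beta$ as the Loeb measure of ${}^{\ast}B$ inside a single F\o lner approximation on which the translation action of $G$ is ergodic, at which point the second conclusion follows from a countable-union argument.

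First I would choose a F\o lner sequence $(T_n)$ with $\overline{d}_{(T_n)}(B)\geq\beta$ and set $X=T_\nu$ for some infinite hypernatural $\nu$. Then $X$ is a F\o lner approximation of $G$, $\mu_X({}^{\ast}B)\gtrsim\beta$, and for every standard $g\in G$ the F\o lner property gives $|gX\bigtriangleup X|/|X|\approx 0$. Consequently $\mu_X$, viewed on the Loeb $\sigma$-algebra of $X$, is a countably additive, left $G$-invariant probability measure.

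Next I would pass to an ergodic component. Decomposing the $G$-invariant measure $\mu_X$ into ergodic components and using that $\mu_X$ is an average of them, I pick a component $m$ with $m({}^{\ast}B)\geq\beta$. I would then appeal to the variational/nonstandard characterization of amenability---every left-invariant mean on $\ell^{\infty}(G)$ is a limit of F\o lner averages and hence, by $\aleph_1$-saturation, arises as $\mu_Y$ for some hyperfinite F\o lner approximation $Y$---to represent $m$ itself as $\mu_Y$ for a new F\o lner approximation $Y$ with $\mu_Y({}^{\ast}B)\gtrsim\beta$ and with the $G$-action on $(Y,\mu_Y)$ ergodic.

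Finally I would observe that $G\cdot{}^{\ast}B=\bigcup_{g\in G}g{}^{\ast}B$ is a countable union of internal sets, hence Loeb measurable, and it is visibly $G$-invariant; by ergodicity and $\mu_Y({}^{\ast}B)>0$ it must have full Loeb measure. Writing $G$ as an increasing union of finite sets $F_1\subseteq F_2\subseteq\cdots$ and noting that ${}^{\ast}(F_n B)=F_n\cdot{}^{\ast}B$ because each $F_n$ is standard and finite, we obtain ${}^{\ast}(F_n B)\nearrow G\cdot{}^{\ast}B$, so $\sigma$-additivity of the Loeb measure gives $\mu_Y({}^{\ast}(F_n B))\to 1$. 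For any $\varepsilon>0$, taking $F=F_n$ with $n$ large enough completes the proof.

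The main obstacle is the middle step: simultaneously extracting an ergodic component of $\mu_X$ and representing it as the Loeb measure of a new F\o lner approximation. In practice this can be done either by a Krein--Milman/Choquet extraction of an extreme point of the compact convex set of invariant means, combined with Day's trick and $\aleph_1$-saturation, or by a direct density-increment procedure on hyperfinite subsets of ${}^{\ast}G$ that iteratively enlarges $X$ until the obstruction set $\bigcap_{F}{}^{\ast}(FB)^{c}\cap X$ becomes Loeb null; this is essentially what Lemma~4.6 of \cite{di_nasso_sumset_2014} accomplishes.
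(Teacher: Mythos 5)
The paper does not reprove this Fact; it refers to \cite[Lemma~4.6]{di_nasso_sumset_2014}, whose argument is a density increment carried out directly on hyperfinite F\o lner approximations: one takes $Y$ with $\mu_Y({}^{\ast}B)$ as large as possible, and if $\sup_F \mu_Y({}^{\ast}(FB))$ were bounded away from $1$, one would use saturation to pass to an internal, still F\o lner subset of $Y$ concentrated on $\bigcup_n {}^{\ast}(F_n B)$, producing a strictly larger density of ${}^{\ast}B$ --- a contradiction.

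Your outline has a genuine gap exactly where you flag it. The opening step (taking $X=T_\nu$ and noting that $\mu_X$ is a $G$-invariant, countably additive probability measure on the Loeb algebra) and the closing step (the countable union ${}^{\ast}(F_n B)\nearrow G\cdot{}^{\ast}B$ must have full measure under an ergodic $\mu_Y$ with $\mu_Y({}^{\ast}B)>0$) are both fine. But the middle step does not go through as described. An ergodic component $m$ of $\mu_X$ is a countably additive measure on (a countably generated $G$-invariant factor of) the Loeb $\sigma$-algebra of $X\subseteq{}^{\ast}G$; it is not an invariant mean on $\ell^{\infty}(G)$, so the ``nonstandard characterization of amenability'' you invoke, which concerns finitely additive means on $G$ itself, simply does not apply to $m$. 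Even if one somehow produced a hyperfinite $Y$ whose Loeb measure agreed with $m$ on the sets ${}^{\ast}(gB)$, $g\in G$, nothing would make the $G$-action on $(Y,\mu_Y)$ ergodic: ergodicity of $m$ is a property of the chosen factor of $X$ and does not transfer to a freshly constructed $Y$. The Krein--Milman alternative you mention hits the same wall, since extreme invariant means on $\ell^{\infty}(G)$ do not correspond to ergodic Loeb systems in the required way. (One should also note that ergodic decomposition requires first passing to a countably generated invariant sub-$\sigma$-algebra, as the full Loeb space is not standard Borel.) Your final sentence correctly names the density-increment argument as the mechanism that actually closes the gap; as written, though, the proof defers the essential step to it rather than carrying it out.
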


For the sake of brevity, we call such a $Y$ as in the above fact a F\o lner approximation for $G$ that is \emph{good for $B$}.

\begin{lemma}\label{technicallemma}
Suppose that $G$ is a countable amenable group, $\mathcal{%
S}=\left( S_{n}\right) $ a F\o lner sequence for $G$, $Y$ a F\o lner approximation for $G$, and $A\subseteq G$.
\begin{enumerate}
\item If $\overline{d}_{\mathcal{S}}\left( A \right)\geq \alpha$, then there is $\nu>\n$ such that 
\begin{equation*}
\frac{\left\vert {}^{\ast }A\cap S_{\nu }\right\vert }{\left\vert S_{\nu
}\right\vert }\gtrsim \alpha \text{\quad and\quad }\frac{1}{\left \vert S_\nu\right \vert}\sum_{x\in S_\nu}\frac{%
\left\vert x({}^*A\cap S_\nu)^{-1}\cap Y\right\vert }{\left\vert
Y\right\vert }\gtrsim \alpha\text{.} \quad(\dagger)
\end{equation*}%
\item If $\underline{d}_{\mathcal{S}}\left( A\right)> \alpha$, then there is $\nu_0>\n$ such that $(\dagger)$ holds for all $\nu\geq \nu_0$.
\end{enumerate}
\end{lemma}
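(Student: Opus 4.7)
The plan is to rewrite the expression in $(\dagger)$ using a Fubini-style swap of summation, reduce the second clause to a uniform F\o lner estimate along $Y$, and then produce $\nu$ via transfer and countable saturation. Setting $C := {}^{\ast}A \cap S_\nu$, the elementary identity $y \in xC^{-1} \iff y^{-1}x \in C$ combined with swapping the order of summation gives
\begin{equation*}
\frac{1}{|S_\nu|}\sum_{x \in S_\nu}\frac{|xC^{-1} \cap Y|}{|Y|} \;=\; \frac{1}{|Y||S_\nu|}\sum_{y \in Y}|S_\nu \cap yC|.
\end{equation*}
Since $yC \subseteq yS_\nu$ and $|yC|=|C|$, each summand satisfies $|S_\nu \cap yC| \geq |C| - |yS_\nu \triangle S_\nu|$, so the right-hand side is bounded below by $|C|/|S_\nu| - \max_{y \in Y}|yS_\nu \triangle S_\nu|/|S_\nu|$. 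Thus $(\dagger)$ reduces to producing $\nu$ simultaneously satisfying $|{}^{\ast}A \cap S_\nu|/|S_\nu| \gtrsim \alpha$ and $\max_{y \in Y}|yS_\nu \triangle S_\nu|/|S_\nu| \approx 0$.

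The key input is the following uniform form of the F\o lner condition: for every finite $F \subseteq G$ and every $\epsilon>0$, there is $N \in \n$ such that $\max_{g \in F}|gS_n \triangle S_n|/|S_n| < \epsilon$ for all $n \geq N$. This is immediate from the pointwise F\o lner condition by taking the maximum of finitely many pointwise thresholds. Transferring this statement and applying it with the internal hyperfinite set $Y$ in place of $F$, I obtain, for each standard $\epsilon>0$, some $N_\epsilon \in {}^{\ast}\n$ such that $\max_{y \in Y}|yS_n \triangle S_n|/|S_n| < \epsilon$ whenever $n \geq N_\epsilon$.

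For part (1), I would apply countable saturation to the internal sets
\begin{equation*}
I_k := \bigl\{n \in {}^{\ast}\n : n > \max(k, N_{1/k})\ \text{ and }\ |{}^{\ast}A \cap S_n|/|S_n| > \alpha - 1/k\bigr\};
\end{equation*}
each $I_k$ is nonempty because transfer of the standard fact that $|A \cap S_n|/|S_n| > \alpha - 1/k$ occurs cofinally in $n$ (a consequence of $\overline{d}_{\mathcal{S}}(A) \geq \alpha$) furnishes such an $n$ beyond $N_{1/k}$, and the threshold $n \geq N_{1/k}$ automatically delivers the required max bound. Any $\nu \in \bigcap_k I_k$ is then infinite and witnesses $(\dagger)$. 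For part (2), $\underline{d}_{\mathcal{S}}(A) > \alpha$ yields a standard $N_A$ with $|{}^{\ast}A \cap S_n|/|S_n| > \alpha$ for all $n \geq N_A$ (by transfer); applying countable saturation to the internal sets $\{\nu_0 \in {}^{\ast}\n : \nu_0 \geq \max(k, N_A, N_{1/k})\}$ produces an infinite $\nu_0$ satisfying $\nu_0 \geq N_{1/k}$ for every standard $k$, and hence $(\dagger)$ holds at every $\nu \geq \nu_0$.

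The most subtle point is the transfer step: one must formulate the F\o lner condition uniformly over \emph{finite} subsets of $G$ (rather than pointwise over elements) so that transfer directly yields control over the specific internal hyperfinite set $Y$. With that formulation in hand, the production of $\nu$ (respectively $\nu_0$) is a routine application of saturation and overspill.
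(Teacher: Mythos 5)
Your argument follows essentially the same route as the paper: the same Fubini-style swap, and the same crucial step of phrasing the F\o lner condition over \emph{finite} subsets of $G$ so that transfer lets it be applied to the internal set $Y$; the paper is only slightly more economical, transferring a single statement that bundles the density bound together with the F\o lner estimate and then evaluating it at $Y$ and an infinite $K$, rather than running two separate transfers followed by countable saturation. One small repair is needed in part (1): as written the sets $I_k$ need not be nested and the $N_{1/k}$ need not be monotone, so the finite intersection property is not immediate from the nonemptiness of each $I_k$ alone; replace $N_{1/k}$ by $\max_{j\le k} N_{1/j}$ (or argue FIP directly by choosing $n$ past the maximum of the finitely many $N_{1/k_i}$ involved) before invoking saturation.
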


\begin{proof}
For (1), first apply transfer to the statement ``for every finite subset $E$ of $G$ and every natural number $k$, there exists $n\geq k$ such that
\begin{equation*}
\frac{1}{\left\vert S_{n}\right\vert }\left\vert A\cap S_{n}\right\vert
>\alpha -2^{-k}\text{\quad and\quad }\frac{1}{\left\vert
S_{n}\right\vert }\sum_{x\in E}\left\vert x^{-1}S_{n}\bigtriangleup
S_{n}\right\vert <2^{-k}\text{.\textquotedblright }
\end{equation*}%
Fix $K>\n$ and let $\nu$ be the result of applying the transferred statement to $Y$ and $K$.  Set $C={}^{\ast }A\cap
S_{\nu }$ and let $\chi _{C}$ denote the characteristic function of $C$. We
have%
\begin{eqnarray*}
\frac{1}{\left\vert S_{\nu }\right\vert }\sum_{x\in S_{\nu }}\frac{%
\left\vert xC^{-1}\cap Y\right\vert }{\left\vert Y\right\vert } &=&\frac{1}{%
\left\vert S_{\nu }\right\vert }\sum_{x\in S_{\nu }}\frac{1}{\left\vert
Y\right\vert }\sum_{y\in Y}\chi _{C}(y^{-1}x) \\
&=&\frac{1}{\left\vert Y\right\vert }\sum_{y\in Y}\frac{\left\vert C\cap
y^{-1}S_{\nu }\right\vert }{\left\vert S_{\nu }\right\vert } \\
&\geq &\frac{\left\vert C\right\vert }{\left\vert S_{\nu }\right\vert }%
-\sum_{y\in Y}\frac{\left\vert y^{-1}S_{\nu }\bigtriangleup S_{\nu
}\right\vert }{\left\vert S_{\nu }\right\vert } \\
&\thickapprox &\alpha \text{.}
\end{eqnarray*}%
For (2), apply transfer to the statement ``for every finite subset $E$ of $G$ and every natural number $k$, there exists $n_0\geq k$ such that, for all $n\geq n_0$, 
\begin{equation*}
\frac{1}{\left\vert S_{n}\right\vert }\left\vert A\cap S_{n}\right\vert
>\alpha -2^{-n_0}\text{\quad and\quad }\frac{1}{\left\vert
S_{n}\right\vert }\sum_{x\in E}\left\vert x^{-1}S_{n}\bigtriangleup
S_{n}\right\vert <2^{-n_0}\text{.\textquotedblright }
\end{equation*}%
Once again, fix $K>\n$ and let $\nu_0$ be the result of applying the transferred statement to $Y$ and $K$.  As above, this $\nu_0$ is as desired.
\end{proof}

\begin{theorem}
\label{Theorem:EBA}Suppose that $G$ is a countable amenable group, $\mathcal{%
S}=\left( S_{n}\right) $ a F\o lner sequence for $G$, and $A,B\subseteq G$.
If $\overline{d}_{\mathcal{S}}\left( A\right) \geq \alpha $ and $\mathrm{BD}%
(B)>0$, then $BA$ is upper $\mathcal{S}$-syndetic of level $\alpha $.
\end{theorem}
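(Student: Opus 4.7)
The plan is to combine the F\o lner approximation $Y$ good for $B$ provided by Fact~\ref{goodfolner} with the averaging estimate $(\dagger)$ for $A$ provided by Lemma~\ref{technicallemma}(1), producing witnesses of the form $x=yc$ with $y\in Y$ well-behaved and $c\in{}^{\ast}A\cap S_{\nu}$ for an appropriately chosen infinite $\nu$.

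I would first fix any $\varepsilon>0$ (say $\varepsilon=1/2$), let $Y$ be a F\o lner approximation for $G$ that is good for $B$, and use Fact~\ref{goodfolner} to extract a single finite $F\subseteq G$ with $|{}^{\ast}(FB)\cap Y|/|Y|>1-\varepsilon$. The claim is that this $F$, chosen once and for all, witnesses upper $\mathcal{S}$-syndeticity of $BA$ at level $\alpha$, regardless of $L$. Fix any finite $L\subseteq G$, apply Lemma~\ref{technicallemma}(1) to obtain $\nu>\mathbb{N}$ satisfying $(\dagger)$, and set $C:={}^{\ast}A\cap S_{\nu}$ and $\gamma:=|C|/|S_{\nu}|$, so that $\gamma\gtrsim\alpha$. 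Define the ``good part'' $Y^{\ast}:=\{y\in Y:Ly\subseteq{}^{\ast}(FB)\}$; using the F\o lner property of $Y$ together with the density bound on ${}^{\ast}(FB)\cap Y$, a per-$\ell$ estimate and a union bound yield $\eta:=|Y\setminus Y^{\ast}|/|Y|\lesssim|L|\varepsilon$. Whenever $y\in Y^{\ast}$ and $c\in C$, the product $x=yc$ satisfies $Lx=(Ly)c\subseteq{}^{\ast}(FB)\cdot C\subseteq{}^{\ast}(FBA)$, so $Z:=(Y^{\ast}C)\cap S_{\nu}$ is contained in $\{x\in S_{\nu}:Lx\subseteq{}^{\ast}(FBA)\}$.

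The crux is estimating $|Z|/|S_{\nu}|$. The computation in the proof of Lemma~\ref{technicallemma} actually gives $\sum_{y\in Y}|yC\cap S_{\nu}|/(|Y||S_{\nu}|)\gtrsim\gamma$ (not merely $\gtrsim\alpha$). Discarding the contribution from $Y\setminus Y^{\ast}$, each summand being bounded by $|C|=\gamma|S_{\nu}|$, yields $\sum_{y\in Y^{\ast}}|yC\cap S_{\nu}|\gtrsim\gamma(1-\eta)|Y||S_{\nu}|$. Coupled with the sharp inequality $|xC^{-1}\cap Y^{\ast}|\leq|Y^{\ast}|=(1-\eta)|Y|$, valid uniformly in $x$, this produces $|Z|/|S_{\nu}|\gtrsim\gamma(1-\eta)/(1-\eta)=\gamma\gtrsim\alpha$. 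Transferring then shows $\{x\in G:Lx\subseteq FBA\}$ has upper $\mathcal{S}$-density at least $\alpha$, so $FBA$ is upper $\mathcal{S}$-thick of level $\alpha$ and the theorem follows.

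The main obstacle is the exact cancellation in the final step. A naive estimate using $|xC^{-1}\cap Y^{\ast}|\leq|Y|$ yields only $\gamma(1-\eta)$, which can fall below $\alpha$ whenever $|L|\varepsilon$ is appreciable; this would force $\varepsilon$ (hence $F$) to depend on $L$, breaking the quantifier structure in the definition of ``upper $\mathcal{S}$-syndetic of level $\alpha$.'' The essential observation is to pair the degraded numerator $\gamma(1-\eta)$ with the tighter denominator $|Y^{\ast}|$: the $(1-\eta)$ factors cancel exactly, and the resulting bound is independent of $L$ and $\varepsilon$. A second, subtler point is that one must use the version $\gtrsim\gamma$ rather than $\gtrsim\alpha$ of the lower bound in $(\dagger)$, since in the case $\overline{d}_{\mathcal{S}}(A)>\alpha$ the value of $\gamma$ may genuinely exceed $\alpha$.
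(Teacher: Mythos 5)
Your approach is genuinely different from the paper's (you try to build witnesses of the form $yc$ with $y\in Y^\ast$, $c\in C$, whereas the paper shows that a Loeb--measure--large set $\Gamma\subseteq S_\nu$ is itself contained in $\,{}^\ast(gFBA)$ for every $g$), but it has a gap at the crucial cancellation step.

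Concretely: your estimate $|Z|/|S_\nu|\gtrsim\gamma$ requires dividing both sides of
\begin{equation*}
\gamma(1-\eta)-\epsilon'\;\leq\;\frac{|Z|\,(1-\eta)}{|S_\nu|}
\end{equation*}
by $1-\eta$, where $\epsilon'$ is the infinitesimal error coming from $(\dagger)$ and from $|Y\setminus Y^\ast|\cdot|C|$. This yields $|Z|/|S_\nu|\geq \gamma-\epsilon'/(1-\eta)$, which is $\gtrsim\gamma$ only when $1-\eta$ is non-infinitesimal, i.e.\ when $\mathrm{st}(\eta)<1$. But your only control on $\eta$ is the union bound $\mathrm{st}(\eta)\leq |L|\varepsilon$, which is vacuous the moment $|L|\varepsilon\geq 1$. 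With $\varepsilon=1/2$ fixed once and for all and $|L|\geq 2$, nothing prevents $Y^\ast$ from having infinitesimal relative density in $Y$ (or from being empty outright): the sets $\ell^{-1}({}^\ast(FB))\cap Y$ each have density roughly $1-\varepsilon$ in $Y$ but their intersection over $\ell\in L$ can be arbitrarily small. In that regime $\epsilon'/(1-\eta)$ need not be infinitesimal, and the cancellation you rely on fails. Choosing $\varepsilon<1/|L|$ would repair the estimate but would make $F$ depend on $L$, which---as you yourself note---is disallowed by the quantifier order in the definition of upper $\mathcal{S}$-syndetic of level $\alpha$.

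The paper circumvents exactly this difficulty by a different mechanism: from $\int_{S_\nu} f\,d\mu_{S_\nu}\geq\alpha$ (with $f(x)=\mathrm{st}(|xC^{-1}\cap Y|/|Y|)$ bounded by $1$) one extracts a \emph{standard} threshold $r>0$ with $\mu_{S_\nu}(\Gamma)\geq\alpha$ for $\Gamma=\{x:|xC^{-1}\cap Y|/|Y|\geq r\}$, and then chooses $F$ with $|{}^\ast(FB)\cap Y|/|Y|>1-r/2$. For any $g\in G$ the F\o lner property gives $|{}^\ast(gFB)\cap Y|/|Y|>1-r$, so for each $x\in\Gamma$ the sets $xC^{-1}\cap Y$ and ${}^\ast(gFB)\cap Y$ together occupy more than $|Y|$ elements of $Y$ and must meet, whence $x\in{}^\ast(gFBA)$. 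Here $r$ and hence $F$ depend only on $Y$ and $\nu$, never on $L$, and no division by a possibly infinitesimal quantity occurs. If you want to rescue your convolution-style argument, you would need a threshold/level-set step of this kind (or some other device) to guarantee $\mathrm{st}(\eta)<1$ uniformly in $L$; the exact cancellation alone does not do it.
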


\begin{proof}
Let $Y$ be a F\o lner approximation for $G$ that is good for $B$.  Let $\nu$ be as in part (1) of Lemma \ref{technicallemma} applied to $Y$ and $A$.  Once again, set $C:={}^*A\cap S_\nu$.  
Consider the $\mu _{S_{\nu }}$-measurable function%
\begin{equation*}
f\left( x\right) =\mathrm{st}\left( \frac{\left\vert xC^{-1}\cap
Y\right\vert }{\left\vert Y\right\vert }\right) .
\end{equation*}%
Observe that 
\begin{equation*}
\int_{S_{\nu }}fd\mu _{S_{\nu }}=\mathrm{st}\left( \frac{1}{|S_{\nu }|}%
\sum_{x\in S_{\nu }}\frac{|xC^{-1}\cap Y|}{|Y|}\right) \geq \alpha ,
\end{equation*}%
whence there is some standard $r>0$ such that $\mu _{S_{\nu }}\left( \left\{ x\in S_{\nu }:f\left( x\right) \geq 2r\right\}
\right) \geq \alpha \text{.}$
%
%
%
Setting $\Gamma =\left\{ x\in S_{\nu }:\frac{|xC^{-1}\cap Y|}{|Y|}\geq r\right\}$, we have that $\mu_{S_\nu}(\Gamma)\geq \alpha$.
Take a finite subset $F$ of $G$ such that%
\begin{equation*}
\frac{\left\vert {}^{\ast }(FB)\cap Y\right\vert }{\left\vert Y\right\vert }%
>1-\frac{r}{2}.
\end{equation*}%
Fix $g\in G$.  Since $Y$ is a F\o lner approximation for $G$, we have that%
\begin{equation*}
\frac{\left\vert {}^{\ast }\left( gFB\right) \cap Y\right\vert }{\left\vert
Y\right\vert }=\frac{\left\vert {}^{\ast }\left( FB\right) \cap g^{-1}Y\right\vert }{\left\vert
Y\right\vert }\approx \frac{\left\vert {}^{\ast }\left( FB\right) \cap Y\right\vert }{\left\vert
Y\right\vert },
\end{equation*}%
whence $\frac{\left\vert {}^{\ast }\left( gFB\right) \cap Y\right\vert }{\left\vert
Y\right\vert }>1-r.$
Thus, for any $x\in \Gamma$, we have that $xC^{-1}\cap {}^*(gFB)\not=\emptyset$.
In particular, if $L$ is a finite subset of $G$, then $\Gamma \subseteq \;^{\ast }\left( \bigcap_{g\in L}gFBA\right) \text{.}$
Therefore%
\begin{equation*}
\overline{d}_{\mathcal{S}}\left( \bigcap_{g\in L}gFBA\right) \geq \mu_{S_\nu}\left({}^*( \bigcap_{g\in L}gFBA)\right)\geq \mu_{S_\nu}(\Gamma)\geq \alpha \text{.}
\end{equation*}
It follows that $BA$ is upper $\mathcal{S}$-syndetic of level $\alpha$.
\end{proof}

As mentioned in the introduction, Theorem 14 and Theorem 18 of \cite%
{di_nasso_high_2015} are immediate consequences of Theorem \ref{Theorem:EBA}%
, after observing that the sequence of sets $\left[ -n,n\right] ^{d}$ as
well as any of its subsequences is a F\o lner sequence for $\mathbb{Z}^{d}$.
Example 15 of \cite{di_nasso_high_2015} shows that the conclusion in Theorem %
\ref{Theorem:EBA} is optimal, even when $G$ is the additive group of
integers and $\mathcal{S}$ is the F\o lner sequence of intervals $\left[ 1,n%
\right] $.

\begin{theorem}
\label{Theorem:EBA-lower}Suppose that $G$ is a countable amenable group, $%
\mathcal{S}=\left( S_{n}\right) $ a F\o lner sequence for $G$, and $%
A,B\subseteq G$. If $\underline{d}_{\mathcal{S}}\left( A\right) \geq \alpha $
and $\mathrm{BD}(B)>0$, then $BA$ is lower $\mathcal{S}$-thick of level $%
\alpha -\varepsilon $ for every $\varepsilon >0$.
\end{theorem}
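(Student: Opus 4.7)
The plan is to follow the proof of Theorem \ref{Theorem:EBA} almost verbatim, making two modifications: use part (2) of Lemma \ref{technicallemma} in place of part (1), so the density bound holds simultaneously for all sufficiently large $\nu$, and extract the level set via a reverse Markov estimate whose threshold $r$ depends only on $\alpha$ and $\varepsilon$ (and hence not on $\nu$), so that the finite witness $F$ can be chosen uniformly.

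Fix $\varepsilon>0$ and let $Y$ be a F\o lner approximation for $G$ that is good for $B$. Since $\underline{d}_{\mathcal{S}}(A)\geq\alpha>\alpha-\varepsilon/4$, apply part (2) of Lemma \ref{technicallemma} to $Y$, $A$, and level $\alpha-\varepsilon/4$ to obtain an infinite hypernatural $\nu_0$ such that, writing $C_\nu:={}^\ast A\cap S_\nu$ and $h_\nu(x):=|xC_\nu^{-1}\cap Y|/|Y|$, one has
\[
\frac{1}{|S_\nu|}\sum_{x\in S_\nu}h_\nu(x)\gtrsim \alpha-\varepsilon/4 \quad \text{for every } \nu\geq\nu_0.
\]
Since $0\leq h_\nu\leq 1$, the elementary bound $\sum_{x\in S_\nu} h_\nu(x)\leq |\Gamma_\nu|+r(|S_\nu|-|\Gamma_\nu|)$, where $\Gamma_\nu:=\{x\in S_\nu:h_\nu(x)\geq r\}$, rearranges (after taking standard parts) to $\mu_{S_\nu}(\Gamma_\nu)\geq (\alpha-\varepsilon/4-r)/(1-r)$ for any standard $r>0$. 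Choose $r>0$ small enough, depending only on $\alpha$ and $\varepsilon$, so that this lower bound is at least $\alpha-\varepsilon$; then $\mu_{S_\nu}(\Gamma_\nu)\geq\alpha-\varepsilon$ for every $\nu\geq\nu_0$.

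With $r$ fixed independently of $\nu$, goodness of $Y$ for $B$ allows us to pick a single finite $F\subseteq G$ with $|{}^\ast(FB)\cap Y|/|Y|>1-r/2$. The remainder is now the tail of the proof of Theorem \ref{Theorem:EBA}: for each $g\in G$ the F\o lner approximation property of $Y$ yields $|{}^\ast(gFB)\cap Y|/|Y|>1-r$, so every $x\in\Gamma_\nu$ satisfies $xC_\nu^{-1}\cap{}^\ast(gFB)\neq\emptyset$ and hence $x\in{}^\ast(gFBA)$. Thus $\Gamma_\nu\subseteq{}^\ast(\bigcap_{g\in L}gFBA)$ for every finite $L\subseteq G$, giving $\mu_{S_\nu}({}^\ast(\bigcap_{g\in L}gFBA))\geq\alpha-\varepsilon$ for every $\nu\geq\nu_0$. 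By the nonstandard characterization of lower density this yields $\underline{d}_{\mathcal{S}}(\bigcap_{g\in L}gFBA)\geq\alpha-\varepsilon$, and replacing $L$ by $L^{-1}$ shows $\{x:Lx\subseteq FBA\}$ has lower $\mathcal{S}$-density at least $\alpha-\varepsilon$ for every finite $L$, which is the desired conclusion. The one genuine obstacle beyond Theorem \ref{Theorem:EBA} is arranging that $F$ does not depend on $\nu$; this is exactly what the \textquotedblleft for all $\nu\geq\nu_0$\textquotedblright\ form of part (2) of Lemma \ref{technicallemma}, combined with the $\nu$-independent threshold $r$ from the reverse Markov estimate, is designed to provide.
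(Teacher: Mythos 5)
Your proposal is correct and takes essentially the same route as the paper's proof: apply Lemma \ref{technicallemma}(2) to obtain the averaging estimate simultaneously for all $\nu\geq\nu_0$, extract a level set $\Gamma_\nu$ of Loeb measure at least $\alpha-\varepsilon$ using a $\nu$-independent threshold, fix a single $F$ via goodness of $Y$ for $B$, and run the same pigeonhole and transfer argument. The only differences are cosmetic: the paper handles the strictness needed for Lemma \ref{technicallemma}(2) by a WLOG reduction to $\underline{d}_{\mathcal{S}}(A)>\alpha$ and sets the threshold equal to $\varepsilon$ directly, whereas you apply the lemma at level $\alpha-\varepsilon/4$ and tune the threshold $r$ via a slightly sharper reverse-Markov computation.
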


\begin{proof}
Without loss of generality, we may suppose that $\underline{d}_{\mathcal{S}}(A)>\alpha$.  Fix a F\o lner approximation $Y$ for $G$ that is good for $B$ and $\nu _{0}>\n$ as in part (2) of Lemma \ref{technicallemma} applied to $Y$ and $A$.  Fix $\nu \geq \nu _{0}$ and standard $\varepsilon >0$ with $\varepsilon
<\alpha $.
Set 
\begin{equation*}
\Gamma _{\nu }=\left\{ x\in S_{\nu }:\frac{\left\vert xC^{-1}\cap
Y\right\vert }{\left\vert Y\right\vert }\geq \varepsilon \right\}
\end{equation*}%
and observe that $\frac{\left\vert \Gamma _{\nu }\right\vert }{\left\vert S_{\nu }\right\vert }%
>\alpha -\varepsilon \text{.}$
Fix a finite subset $F$ of $G$ such that%
\begin{equation*}
\frac{\left\vert {}^{\ast }(FB)\cap Y\right\vert }{\left\vert Y\right\vert }%
>1-\frac{\varepsilon}{2} \text{.}
\end{equation*}%
Fix $g\in G$.  Since $Y$ is a F\o lner approximation of $G$, arguing as in the proof of the previous theorem, we conclude that%
\begin{equation*}
\frac{\left\vert {}^{\ast }\left( gFB\right) \cap Y\right\vert }{\left\vert
Y\right\vert }>1-\varepsilon \text{.}
\end{equation*}%
Fix $L\subseteq G$ finite.  Once again, it follows that $\Gamma _{\nu }\subset \;^{\ast }\left( \bigcap_{g\in L}gFBA\right)$
whence
\begin{equation*}
\frac{\left\vert {}^{\ast }\left( \bigcap_{g\in L}gFBA\right) \cap S_{\nu
}\right\vert }{\left\vert S_{\nu }\right\vert }\geq \frac{\left\vert \Gamma
_{\nu }\right\vert }{\left\vert S_{\nu }\right\vert }> \alpha
-\varepsilon \text{.}
\end{equation*}%
Since the previous inequality held for every $\nu \geq \nu _{0}$, by transfer we can conclude that%
\begin{equation*}
\underline{d}_{\mathcal{S}}\left( \bigcap_{g\in L}gFBA\right) \geq \alpha
-\varepsilon \text{.}
\end{equation*}
It follows that $BA$ is lower $\mathcal{S}$-syndetic of level $\alpha-\epsilon$.
\end{proof}

As mentioned in the introduction, Theorem \ref{Theorem:EBA-lower} is a generalization of \cite[Theorem 19]{di_nasso_high_2015} and was first proven by M. Bj\"{o}%
rklund using ergodic-theoretic methods.

\section{A bound on the number of translates\label{Section:bound}}

The following theorem is a refinement of \cite[Corollary 3.4]%
{di_nasso_nonstandard_2014}. In particular, we improve the bound on the
number of translates, and also obtain an estimate on the $\mathcal{S}$%
-density of translates that witness the thickness of $EBA$.

\begin{theorem}
Suppose that $G$ is a countable amenable group, $\mathcal{S}=\left(
S_{n}\right) $ a F\o lner sequence for $G$, and $A,B\subseteq G$. If $d_{%
\mathcal{S}}\left( A\right) \geq \alpha $ and $\mathrm{BD}\left( B\right)
\geq \beta $, then there exists $s>0$ and a finite subset $E\subseteq G$
such that $\left\vert E\right\vert \leq \frac{1}{\alpha \beta }-\frac{1}{%
\alpha }+1$ and $EBA$ is $\mathcal{S}$-thick of level $s$.
\end{theorem}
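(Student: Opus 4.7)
The plan is to refine the proof of Theorem~\ref{Theorem:EBA} by introducing a greedy construction of $E$ that tracks its cardinality explicitly.

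Set up as before: by Fact~\ref{goodfolner}, pick a F\o lner approximation $Y$ good for $B$; by Lemma~\ref{technicallemma}(1), get $\nu > \n$ satisfying $(\dagger)$. Write $C := {}^*A \cap S_\nu$ and $B' := {}^*B \cap Y$; these satisfy $|C|/|S_\nu| \gtrsim \alpha$ and $|B'|/|Y| \gtrsim \beta$. Fix a small standard $r \in (0,\alpha/2)$ and set $\Gamma := \{x \in S_\nu : |xC^{-1} \cap Y|/|Y| \geq 2r\}$, so by Markov's inequality applied to the average in $(\dagger)$ we have $\mu_{S_\nu}(\Gamma) \geq \alpha - 2r > 0$. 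Following the core argument of Theorem~\ref{Theorem:EBA}, once a finite $E \subseteq G$ is produced with $|{}^*(EB) \cap Y|/|Y| \geq 1 - r/2$, the F\o lner property of $Y$ yields $\Gamma \subseteq {}^*(gEBA)$ for every $g \in G$, so $EBA$ is $\mathcal{S}$-thick of level $s := \alpha - 2r > 0$. Thus it suffices to build such an $E$ with $|E| \leq \tfrac{1}{\alpha\beta} - \tfrac{1}{\alpha} + 1$.

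The plan for this combinatorial task is a greedy procedure. Set $e_1 := e_G$; then $|{}^*(e_1 B) \cap Y|/|Y| = |B'|/|Y| \gtrsim \beta$. At step $k+1$, given $E_k$ with $t_k := |{}^*(E_k B) \cap Y|/|Y| < 1 - r/2$, the goal is to choose $e_{k+1} \in G$ so that $t_{k+1} \geq t_k + \alpha\beta$. Granting this per-step improvement, the invariant $t_k \geq \beta + (k-1)\alpha\beta$ forces termination within $k \leq 1 + (1 - r/2 - \beta)/(\alpha\beta)$ steps, which is at most $\tfrac{1}{\alpha\beta} - \tfrac{1}{\alpha} + 1$ when $r$ is sufficiently small (possibly taken infinitesimal, and then discretized by transfer).

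The main obstacle is justifying the per-step improvement of $\alpha\beta$: showing that whenever the uncovered portion of $Y$ has measure at least $\alpha\beta$, one can find $e_{k+1} \in G$ whose translate $e_{k+1}{}^*B$ contributes fresh coverage of size $\alpha\beta|Y|$. The intended argument is an averaging estimate that couples both density hypotheses. The double-counting identity from the proof of Lemma~\ref{technicallemma}, applied with $Y \setminus {}^*(E_k B)$ in place of $Y$, should yield $\tfrac{1}{|S_\nu|}\sum_{x \in S_\nu} |xC^{-1} \cap (Y \setminus {}^*(E_k B))|/|Y| \gtrsim \alpha(1 - t_k)$, and combining this with the density $|B'|/|Y| \gtrsim \beta$ should identify a suitable $e_{k+1}$ after passing from a hyperfinite average back to a concrete element of $G$ via transfer or overspill. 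The delicate point is controlling the F\o lner error terms, particularly because hyperstandard elements of $C$ do not come with automatic F\o lner-invariance with respect to $Y$.
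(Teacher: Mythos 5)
Your overall architecture is different from the paper's, and the gap you flag yourself is in fact fatal to the approach as stated. The paper does not build $E$ to cover $Y$ with ${}^*(EB)$; instead it runs a greedy procedure on the $S_\nu$ side, choosing $s_0,s_1,\ldots$ so that ${}^*(\{s_0,\ldots,s_{n}\}BA)$ covers larger and larger fractions of $\Gamma$, and it bounds the number of steps via a disjointness argument inside $Y$: from each $s_j$ one extracts a witness $\gamma_j\in\Gamma$ with $s_j\gamma_j\notin{}^*(\{s_0,\ldots,s_{j-1}\}BA)$, and then $s_0D,\ s_1(\gamma_1C^{-1}\cap D),\ \ldots,\ s_n(\gamma_nC^{-1}\cap D)$ are pairwise disjoint subsets of (approximately) $Y$ with measures $\gtrsim\beta,\ \gtrsim\alpha\beta,\ \ldots$, forcing $\beta+n\alpha\beta\lesssim 1$. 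The fresh contribution $\alpha\beta$ never degrades because each new set is exactly disjoint from the previous ones by construction, not merely "mostly" disjoint.

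Your covering-$Y$ greedy cannot deliver a per-step gain of $\alpha\beta$. Since $Y$ is a F\o lner approximation and $|{}^*B\cap Y|/|Y|\approx\beta$, any translate $e\,{}^*B$ with $e\in G$ satisfies $|e\,{}^*B\cap Y|/|Y|\approx\beta$, so its fresh contribution to $Y\setminus{}^*(E_kB)$ is at most $\min(\beta,\,1-t_k)$ in measure, and in general on the order of $\beta(1-t_k)$. This decays as $t_k\to 1$: already when $1-t_k<\alpha\beta$ your claimed gain would force $e_{k+1}{}^*B$ to swallow the entire uncovered set, which is false in general; and even the more modest gain $\beta(1-t_k)$ only yields $1-t_k\leq(1-\beta)^k$, so the number of steps to reach $1-r/2$ blows up as $r\to 0$ and never gives the bound $\frac{1}{\alpha\beta}-\frac{1}{\alpha}+1$. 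Finally, the averaging identity you invoke, $\tfrac{1}{|S_\nu|}\sum_x |xC^{-1}\cap(Y\setminus{}^*(E_kB))|/|Y|\gtrsim\alpha(1-t_k)$, is a statement about translates of $C^{-1}$ meeting the uncovered part, not about translates of ${}^*B$; nothing in the proposal converts that into a choice of $e_{k+1}\in G$ for which $e_{k+1}{}^*B$ has large fresh coverage. The fix is to run the greedy on $\Gamma$ as in the paper, pairing each new $s_j$ with a witness $\gamma_j$ and pushing $\gamma_jC^{-1}\cap D$ into $Y$; that is where both $\alpha$ and $\beta$ enter per step without degradation.
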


\begin{proof}
Reasoning as in the proof of Theorem \ref{Theorem:EBA} one can
prove that there exist $\nu \in {}^{\ast }\mathbb{N}\backslash \mathbb{N}$,
a F\o lner approximation $Y$ of $G$, a standard $r>0$, and an infinitesimal $%
\eta \in {}^{\ast }\mathbb{R}_{+}$ such that, if $C={}^{\ast }A\cap S_{\nu }$%
, $D={}^{\ast }B\cap Y$, and%
\begin{equation*}
\Gamma :=\left\{ x\in S_{\nu }:\frac{1}{\left\vert Y\right\vert }\left\vert
xC^{-1}\cap D\right\vert \geq \alpha \beta -\eta \right\} \text{,}
\end{equation*}%
then $\left\vert \Gamma \right\vert \geq r\left\vert S_{\nu }\right\vert $.
Fix a family $\left( p_{g}\right) _{g\in G}$ of strictly positive standard
real numbers such that $\sum_{g\in G}p_{g}\leq \frac{1}{2}$.

We now define a sequence of subsets $(H_{n})$ of $G$ and a sequence $(s_{n})$
from $G$. Define%
\begin{equation*}
H_{0}:=\left\{ g\in G:\frac{1}{\left\vert \Gamma \right\vert }\left\vert
\left\{ x\in \Gamma :gx\notin {}^{\ast }\left( BA\right) \right\}
\right\vert >p_{g}\right\} \text{.}
\end{equation*}%
If $H_{n}$ has been defined and is nonempty, let $s_{n}$ be any element of $%
H_{n}$ and set%
\begin{equation*}
H_{n+1}:=\left\{ g\in G:\frac{1}{\left\vert \Gamma \right\vert }\left\vert
\left\{ x\in \Gamma :gx\notin {}^{\ast }\left( \left\{ s_{0},\ldots
,s_{n}\right\} BA\right) \right\} \right\vert >p_{g}\right\} \text{.}
\end{equation*}%
If $H_{n}=\varnothing $ then we set $H_{n+1}=\varnothing $.

We claim $H_{n}=\emptyset $ for $n>\left\lfloor \frac{1}{\alpha \beta }-%
\frac{1}{\alpha }\right\rfloor $. Towards this end, suppose $H_{n}\neq
\varnothing $. For $0\leq k\leq n$, take $\gamma _{k}\in \Gamma $ such that%
\begin{equation*}
s_{k}\gamma _{k}\notin {}^{\ast }\left( \left\{ s_{0},\ldots
,s_{k-1}\right\} BA\right) .
\end{equation*}%
Observe that the sets $s_{0}D$, $s_{1}(\gamma _{1}C^{-1}\cap D)$, ..., $%
s_{n}(\gamma _{n}C^{-1}\cap D)$ are pairwise disjoint. In fact, if 
\begin{equation*}
s_{i}D\cap s_{j}\gamma _{j}C^{-1}\neq \varnothing
\end{equation*}%
for $0\leq i<j\leq n$, then%
\begin{equation*}
s_{j}\gamma _{j}\in s_{i}DC\subseteq {}^{\ast }\left( \left\{ s_{0},\ldots
,s_{j-1}\right\} BA\right) \text{,}
\end{equation*}%
contradicting the choice of $\gamma _{j}$. Therefore we have that%
\begin{eqnarray*}
1 &\gtrsim &\frac{1}{\left\vert Y\right\vert }\left\vert s_{0}D\cup
s_{1}\left( (\gamma _{1}C^{-1}\cap D\right) \cup \cdots \cup s_{n}\left(
(\gamma _{n}C^{-1}\cap D\right) \right\vert \\
&\geq &\frac{1}{\left\vert Y\right\vert }\left( \left\vert D\right\vert
+\sum_{i=1}^{n}\left\vert \gamma _{i}C^{-1}\cap D\right\vert \right) \\
&\geq &\beta +\alpha \beta n\text{.}
\end{eqnarray*}%
It follows that $n\leq \left\lfloor \frac{1}{\alpha \beta }-\frac{1}{\alpha }%
\right\rfloor $.

Take the least $n$ such that $H_{n}=\emptyset $. Note that $n\leq
\left\lfloor \frac{1}{\alpha \beta }-\frac{1}{\alpha }\right\rfloor +1$. If $%
n=0$ then $BA$ is already $\mathcal{S}$-thick of level $r$, and there is
nothing to prove. Let us assume that $n\geq 1$, and set $E=\left\{
s_{0},\ldots ,s_{n-1}\right\} $. It follows that, for every $g\in G$, we
have that%
\begin{equation*}
\frac{1}{\left\vert \Gamma \right\vert }\left\vert \left\{ x\in \Gamma
:gx\in {}^{\ast }\left( EBA\right) \right\} \right\vert \geq 1-p_{g}\text{.}
\end{equation*}%
Suppose that $L$ is a finite subset of $G$. Then%
\begin{equation*}
\frac{1}{\left\vert \Gamma \right\vert }\left\vert \left\{ x\in \Gamma
:Lx\subseteq {}^{\ast }\left( EBA\right) \right\} \right\vert \geq
1-\sum_{g\in L}p_{g}\geq 1-\sum_{g\in G}p_{g}\geq \frac{1}{2}\text{.}
\end{equation*}%
Therefore%
\begin{equation*}
\frac{1}{\left\vert S_{\nu }\right\vert }\left\vert \left\{ x\in S_{\nu
}:Lx\subseteq {}^{\ast }\left( EBA\right) \right\} \right\vert \geq \frac{r}{%
2}\text{.}
\end{equation*}%
This shows that $EBA$ is $\mathcal{S}$-thick of level $\frac{r}{2}$.
\end{proof}

With a similar argument and using Markov's inequality \cite[Lemma 1.3.15]%
{tao_introduction_2011} one can also prove the following result. We omit the
details.

\begin{theorem}
Suppose that $G$ is a countable amenable group, $\mathcal{S}=\left(
S_{n}\right) $ a F\o lner sequence for $G$, and $A,B\subseteq G$. If $d_{%
\mathcal{S}}\left( A\right) \geq \alpha $ and $\mathrm{BD}\left( B\right)
\geq \beta $, then for every $\gamma \in (0,\alpha \beta ]$ there exists $%
E\subseteq G$ such that $\left\vert E\right\vert \leq \frac{1-\beta }{\gamma 
}+1$ and $EBA$ is $\mathcal{S}$-syndetic of level $\frac{\alpha \beta
-\gamma }{1-\gamma }$.
\end{theorem}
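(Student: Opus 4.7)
I would follow the proof of the previous theorem almost verbatim, inserting Markov's inequality where a quantitative lower bound on the measure of the auxiliary set is needed. By Fact~\ref{goodfolner} and Lemma~\ref{technicallemma}(1), pick a F\o lner approximation $Y$ of $G$ good for $B$ together with $\nu>\n$. Set $C={}^{\ast}A\cap S_\nu$ and $D={}^{\ast}B\cap Y$. A computation like the one in Lemma~\ref{technicallemma}, but intersecting with $D$ in place of $Y$ and using $|D|/|Y|\gtrsim\beta$, shows that
\[
\frac{1}{|S_\nu|}\sum_{x\in S_\nu}\frac{|xC^{-1}\cap D|}{|Y|}\gtrsim\alpha\beta,
\]
so the $\mu_{S_\nu}$-measurable function $f(x):=\mathrm{st}\bigl(|xC^{-1}\cap D|/|Y|\bigr)$ satisfies $0\leq f\leq 1$ and $\int f\,d\mu_{S_\nu}\geq\alpha\beta$.

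This is where Markov's inequality enters. For $\gamma\in(0,\alpha\beta]$ and $\Gamma_\gamma:=\{x\in S_\nu:f(x)\geq\gamma\}$, the elementary estimate
\[
\alpha\beta\leq\int f\,d\mu_{S_\nu}\leq\mu_{S_\nu}(\Gamma_\gamma)+\gamma\bigl(1-\mu_{S_\nu}(\Gamma_\gamma)\bigr)
\]
gives $\mu_{S_\nu}(\Gamma_\gamma)\geq(\alpha\beta-\gamma)/(1-\gamma)$. This is the new quantitative input, replacing the bare $\mu_{S_\nu}(\Gamma)\geq r$ appearing in the proof of the previous theorem.

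I would then run the greedy construction of $E=\{s_0,\dots,s_{n-1}\}$ exactly as before, with $\Gamma_\gamma$ in place of $\Gamma$ and with threshold $\gamma$ in place of $\alpha\beta-\eta$. The disjointness of the sets $s_0 D,\,s_1(\gamma_1 C^{-1}\cap D),\dots,s_n(\gamma_n C^{-1}\cap D)$ inside $Y$, combined with $|\gamma_i C^{-1}\cap D|/|Y|\geq\gamma$ for each $i$, yields $1\gtrsim\beta+n\gamma$, so the greedy process terminates with $|E|\leq\lfloor(1-\beta)/\gamma\rfloor+1\leq(1-\beta)/\gamma+1$, matching the claimed bound.

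The only subtle point I foresee is pinning down the exact thickness level $(\alpha\beta-\gamma)/(1-\gamma)$. The probabilistic weights $(p_g)$ used to control all finite $L\subseteq G$ simultaneously cost a multiplicative factor $(1-\sum_{g\in G} p_g)$ in the level, so directly one obtains $EBA$ thick only of level $(1-\sum_g p_g)(\alpha\beta-\gamma)/(1-\gamma)$. I would absorb this loss by choosing $\sum_g p_g$ arbitrarily small and, if necessary, passing from $\gamma$ to a slightly smaller parameter $\gamma'<\gamma$ before applying the Markov step, exploiting the fact that $(\alpha\beta-\gamma')/(1-\gamma')$ is strictly decreasing in $\gamma'$ so that $(1-\sum_g p_g)(\alpha\beta-\gamma')/(1-\gamma')\geq(\alpha\beta-\gamma)/(1-\gamma)$. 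Modulo this bookkeeping, the argument is a mechanical combination of the Markov measure estimate with the greedy disjointness construction of the preceding theorem.
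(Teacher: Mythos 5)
Your proposal is correct and implements exactly what the paper intends: the paper omits the proof of this theorem, saying only that it follows by ``a similar argument and using Markov's inequality,'' and your plan does precisely that --- the average $\frac{1}{|S_\nu|}\sum_{x\in S_\nu}|xC^{-1}\cap D|/|Y|\gtrsim\alpha\beta$, the truncated-Markov bound $\mu_{S_\nu}(\{f\geq\gamma\})\geq\frac{\alpha\beta-\gamma}{1-\gamma}$, and the greedy disjointness construction with threshold $\gamma$ giving $n\gamma\lesssim 1-\beta$ all check out, and the device of shrinking $\gamma$ to $\gamma'<\gamma$ and $\sum_g p_g$ accordingly recovers the stated level without violating the cardinality bound (since $\lfloor(1-\beta)/\gamma'\rfloor+1\leq(1-\beta)/\gamma+1$ for $\gamma'$ close enough to $\gamma$). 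One small technical point: $\Gamma_\gamma=\{x:f(x)\geq\gamma\}$ is a Loeb-measurable but not internal set, so for the greedy cardinality counts you should work instead with the internal set $\Gamma=\{x\in S_\nu: |xC^{-1}\cap D|/|Y|\geq\gamma-\eta\}$ for a suitable positive infinitesimal $\eta$ (as the paper does in the preceding theorem), which contains $\Gamma_\gamma$ and hence inherits the measure lower bound.
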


%

\bibliographystyle{amsplain}
\bibliography{bibliography}

\providecommand{\bysame}{\leavevmode\hbox to3em{\hrulefill}\thinspace}
\providecommand{\MR}{\relax\ifhmode\unskip\space\fi MR }
\providecommand{\MRhref}[2]{%
  \href{http://www.ams.org/mathscinet-getitem?mr=#1}{#2}
}
\providecommand{\href}[2]{#2}
\begin{thebibliography}{1}

\bibitem{beiglbock_sumset_2010}
Mathias Beiglb{\"{o}}ck, Vitaly Bergelson, and Alexander Fish, \emph{Sumset
  phenomenon in countable amenable groups}, Advances in Mathematics
  \textbf{223} (2010), no.~2, 416--432.

\bibitem{bjorklund}
Michael Bj{\"{o}}rklund, personal communication.

\bibitem{di_nasso_sumset_2014}
Mauro Di~Nasso, Isaac Goldbring, Renling Jin, Steven Leth, Martino Lupini, and
  Karl Mahlburg, \emph{On a sumset conjecture of {E}rd{\H{o}}s}, Canadian
  Journal of Mathematics (2014), 1--17.

\bibitem{di_nasso_high_2015}
\bysame, \emph{High density piecewise syndeticity of sumsets}, Advances in
  Mathematics (2015), no.~278, 1--33.

\bibitem{di_nasso_nonstandard_2014}
Mauro Di~Nasso and Martino Lupini, \emph{Nonstandard analysis and the sumset
  phenomenon in arbitrary amenable groups}, Illinois Journal of Mathematics
  \textbf{58} (2014), no.~1, 11--25.

\bibitem{jin}
Renling Jin, \emph{The sumset phenomenon}, Proceedings of the American
  Mathematical Society \textbf{130} (2002), no.~3, 855--861.

\bibitem{jin_introduction_2008}
\bysame, \emph{Introduction of nonstandard methods for number theorists},
  Integers. Electronic Journal of Combinatorial Number Theory \textbf{8}
  (2008), no.~2.

\bibitem{tao_introduction_2011}
Terence Tao, \emph{An introduction to measure theory}, Graduate {Studies} in
  {Mathematics}, vol. 126, American Mathematical Society, Providence, RI, 2011.

\end{thebibliography}

\end{document}